\newcommand{\legendre}[2]{\genfrac {(}{)}{1pt}{}{#1}{#2}}
\newcommand {\F}{\mathbb {F}}
\newcommand{\Fp}{\F_p}
\newcommand{\Fq}{\F_q}
\newcommand{\C}{\mathbb{C}}
\newcommand{\Z}{\mathbb{Z}}
\newcommand{\M}{\textsf{M}}
\newcommand{\Cl}{\operatorname{Cl}}
\newcommand{\Ell}{\operatorname{Ell}}
\newcommand{\End}{\operatorname{End}}
\newcommand{\Gal}{\operatorname{Gal}}
\newcommand{\Res}{\operatorname{Res}}
\newcommand{\f}{\mathfrak{f}}
\newcommand{\w}{\mathfrak{w}}
\newcommand{\af}{\mathfrak{a}}
\newcommand{\bfrak}{\mathfrak{b}}
\newcommand{\lf}{\mathfrak{l}}
\newcommand{\pf}{\mathfrak{p}}
\newcommand{\nf}{\mathfrak{n}}
\newcommand{\WN}{W|_N}
\newcommand{\EllD}{\mathrm {Ell}_{\O}}
\renewcommand{\O}{\mathcal{O}}
\newcommand{\rmPhi}{{\rm\Phi}}
\newcommand{\rmPsi}{{\rm\Psi}}
\newcommand{\rmGamma}{{\rm\Gamma}}
\newtheorem{algorithm}{Algorithm}
\newcommand{\algstart}[2]{\smallskip\noindent{\bf Algorithm} #1. \emph{#2}\begin{enumerate}}
\newcommand{\algend}{\end{enumerate}\vspace{4pt}}
\begin{document}

\title{Class invariants by the CRT method}
\author{Andreas Enge$^1$\and Andrew V. Sutherland$^2$}
\institute{$^1$INRIA Bordeaux--Sud-Ouest\hspace{20pt}$^2$Massachusetts Institute of Technology}

\maketitle

\begin{abstract}
We adapt the CRT approach for computing Hilbert class polynomials to handle
a wide range of class invariants.
For suitable discriminants $D$, this improves its performance by a large constant
factor, more than 200 in the most favourable circumstances.
This has enabled record-breaking constructions of elliptic curves
via the CM method, including examples with $|D|>10^{15}$.
\end{abstract}

\section{Introduction}
Every ordinary elliptic curve $E$ over a finite field $\Fq$
has \textit {complex multiplication} by an imaginary quadratic order $\O$,
by which we mean that the endomorphism ring $\End(E)$
is isomorphic to $\O$. The Deuring lifting theorem implies that $E$ is the
reduction of an elliptic curve $\hat{E}/\C$ that also has complex multiplication by~$\O$.
Let $K$ denote the fraction field of $\O$.
The $j$-invariant of $\hat{E}$ is an algebraic integer whose minimal polynomial
over $K$ is the \textit {Hilbert class polynomial} $H_D$, where
$D$ is the discriminant of $\O$.  Notably, the polynomial $H_D$ actually lies in $\Z[X]$,
and its splitting field is the \textit {ring class field} $K_\O$ for the order $\O$.

Conversely, an elliptic curve $E/\Fq$ with complex multiplication by $\O$
exists whenever $q$ satisfies the norm equation $4q=t^2-v^2D$, with $t,v\in\Z$
and $t\not\equiv 0$ modulo the characteristic of $\Fq$.
In this case $H_D$ splits completely over $\Fq$, and its roots are precisely the
$j$-invariants of the elliptic curves $E/\Fq$ that have complex multiplication
by~$\O$.
Such a curve has $q+1\pm t$ points, where $t$ is determined, up to a
sign, by the norm equation.  With a judicious selection of $D$ and $q$
one may obtain a curve with prescribed order.
This is known as the \emph{CM~method}.

The main challenge for the CM method is to obtain the polynomial $H_D$, which
has degree equal to the class number $h(D)$, and total size
$O(|D|^{1+\epsilon})$.
There are three approaches to computing $H_D$, all of which, under
reasonable assumptions, can achieve a running time of $O(|D|^{1+\epsilon})$.
These include the complex analytic method \cite{Enge:FloatingPoint}, a $p$-adic
algorithm \cite{Couveignes:ClassPolynomial,Broker:pAdicClassPolynomial},
and an approach based on the Chinese Remainder Theorem (CRT)
\cite{Belding:HilbertClassPolynomial}.
The first is the most widely used, and it is quite efficient; the
range of discriminants to which it may be applied is limited not
by its running time, but by the space required.
The polynomial $H_D$ is already likely to exceed available memory when
$|D|>10^9$, hence one seeks to apply the CM method to alternative class polynomials that
have smaller coefficients than~$H_D$.  This makes computations with $|D|>10^{10}$
feasible.

Recently, a modified version of the CRT approach was proposed that greatly
reduces the space required for the CM method
\cite{Sutherland:HilbertClassPolynomials}.
Under the Generalised Riemann Hypothesis (GRH), this algorithm is able to
compute $H_D\bmod P$ using $O(|D|^{1/2+\epsilon}\log P)$ space and
$O(|D|^{1+\epsilon})$ time.
(Here and in the following, all complexity estimates refer to bit operations.)
The reduced space complexity allows it to handle much
larger discriminants, including examples with $|D| > 10^{13}$.

An apparent limitation of the CRT approach is that it depends on some specific
features of the $j$-function. As noted in \cite{Belding:HilbertClassPolynomial},
this potentially precludes it from computing class polynomials other than $H_D$.
The purpose of the present article is to show how these obstructions may be
overcome, allowing us to apply the CRT method to many functions other than $j$,
including two infinite families.

Subject to suitable constraints on $D$, we may then
compute a class polynomial with smaller coefficients
than $H_D$ (by a factor of up to 72), and, in certain cases, with
smaller degree (by a factor of 2).
Remarkably, the actual running time with the CRT method is
typically \textit {better} than the size difference would suggest.
Fewer CRT moduli are needed, and we may choose a subset for
which the computation is substantially faster than on average.

We start~\S\ref {sec:crtj} with a brief overview of the CRT method, and then
describe a new technique to improve its performance, which also turns out
to be crucial for certain class invariants.
After discussing families of invariants in~\S\ref {sec:invariants},
we consider CRT-based approaches applicable to the different families and
give a general algorithm in~\S\ref {sec:crtinv}.  Computational results and performance
data appear in~\S\ref {sec:implementation}.

\section{Hilbert class polynomials via the CRT}
\label {sec:crtj}

\subsection {The algorithm of Belding, Br\"oker, Enge, Lauter and Sutherland}
\label {ssec:bbels}

The basic idea of the CRT-based algorithm for Hilbert class
polynomials is to compute $H_D$ modulo many small primes~$p$,
and then lift its coefficients by Chinese remaindering to integers,
or to their reductions modulo a large (typically prime) integer~$P$,
via the explicit CRT \cite[Thm. 3.1]{Bernstein:ModularExponentiation}.  The latter
approach suffices for most applications, and while it does not substantially
reduce the running time (the same number of small primes is required), it
can be accomplished using only $O(|D|^{1/2+\epsilon}\log P)$ space with
the method of \cite[\S6]{Sutherland:HilbertClassPolynomials}.

For future reference, we summarise the algorithm to compute $H_D\bmod p$
for a prime $p$ that splits completely in the ring class field $K_\O$.  Let $h=h(D)$.

\begin{algorithm}[Computing $\boldsymbol{H_D\bmod p}$]
\label {alg:crtj}
\quad
\begin{enumerate}
\item
Find the $j$-invariant $j_1$ of an elliptic curve $E/\Fp$ with $\End(E)\cong\O$.
\item
Enumerate the other roots $j_2, \ldots, j_{h}$ of $H_D \bmod p$.
\item
Compute $H_D(X) \bmod p = (X-j_1) \cdots (X-j_{h})$.
\end{enumerate}
\end {algorithm}

The first step is achieved by varying $j_1$ (systematically or
randomly) over the elements of $\Fp$ until it corresponds to a suitable
curve; details and many practical improvements are given in
\cite{Belding:HilbertClassPolynomial,Sutherland:HilbertClassPolynomials}.
The third step is a standard building block of computer algebra.
Our interest lies in Step~2.

\subsection{Enumerating the roots of $H_D\bmod p$}
\label{ssec:crtjenum}

The key idea in \cite{Belding:HilbertClassPolynomial} leading to a
quasi-linear complexity is to apply the Galois action of
$\Cl(\O) \simeq \Gal (K_\O/K)$.  The group $\Cl(\O)$ acts on the roots of $H_D$,
and when $p$ splits completely in $K_\O$ there is a corresponding
action on the set $\EllD(\Fp)=\{j_1,\ldots,j_h\}$ containing the roots of
$H_D \bmod p$.  For an ideal class $[\af]$ in $\Cl(\O)$ and a $j$-invariant
$j_i\in\EllD(\Fp)$,
let us write $[\af]j_i$ for the image of $j_i$ under the Galois
action of $[\af]$.  We then have $\EllD(\Fp)=\{[\af]j_1:[\af]\in\Cl(\O)\}$.

As in \cite[\S 5]{Sutherland:HilbertClassPolynomials}, we
use a polycyclic presentation defined by a sequence
of ideals $\lf_1,\ldots,\lf_m$ with prime norms
$\ell_1,\ldots,\ell_m$ whose classes generate $\Cl(\O)$.  The \emph{relative order}
$r_k$ is the least positive integer for which
$[\lf_k^{r_k}] \in \langle [\lf_1], \ldots, [\lf_{k-1}] \rangle$.
We may then uniquely write
$[\af] = [\lf_1^{e_1}]\cdots[\lf_m^{e_m}]$, with $0 \leq e_k < r_k$.
To maximise performance, we use a presentation
in which $\ell_1<\cdots <\ell_m$, with each~$\ell_k$
as small as possible subject to $r_k > 1$.  Note that the relative order $r_k$
divides the order~$n_k$ of $[\lf_k]$ in $\Cl(\O)$, but for $k>1$ we can
(and often do) have $r_k < n_k$.

For each $j_i\in\EllD(\Fp)$ and each $\O$-ideal $\lf$ of prime norm $\ell$, the $j$-invariant~$[\lf]j_i$ corresponds to an
$\ell$-isogenous curve, which we may obtain as a root of
$\rmPhi_{\ell}(j_i,X)$, where $\rmPhi_\ell\in\Z[J,J_{\ell}]$ is the
\textit {classical modular polynomial} \cite[\S 69]{Weber:Algebra}.
The polynomial~$\rmPhi_\ell$ has the pair of functions $\bigl(j(z),j(\ell z)\bigr)$
as roots, and parameterises isogenies of degree~$\ell$.

Fixing an isomorphism $\End(E)\cong \O$, we let $\pi\in\O$ denote the
Frobenius endomorphism.  When the order~$\Z[\pi]$ is maximal at
$\ell$, the univariate polynomial
$\rmPhi_\ell(j_i,X)\in\Fp[X]$ has exactly two roots $[\lf]j_i$ and~$[\bar\lf]j_i$
when $\ell$ splits in $\O$, and a single root $[\lf]j_i$ if $\ell$ is
ramified \cite[Prop.~23]{Kohel:Thesis}.  To simplify matters, we assume here
that~$\Z[\pi]$ is maximal at each $\ell_k$, but this is not necessary,
see \cite[\S 4]{Sutherland:HilbertClassPolynomials}.

We may enumerate $\EllD(\Fp)=\{[\af]j_1:[\af]\in\langle[\lf_1],\ldots,[\lf_m]\rangle\}$
via \cite[Alg.~1.3]{Sutherland:HilbertClassPolynomials}:
\begin{algorithm}[Enumerating $\boldsymbol{\EllD(\Fp)}$ --- Step 2 of Algorithm
\ref {alg:crtj}]
\label {alg:crtjenum}
\quad
\begin{enumerate}
\item
Let $j_2$ be an arbitrary root of $\rmPhi_{\ell_m}(j_1,X)$ in $\Fp$.
\item
For $i$ from $3$ to $r_m$, let $j_i$ be the root of $\rmPhi_{\ell_m}(j_{i-1},X)/(X-j_{i-2})$ in $\Fp$.
\item
If $m > 1$, then for $i$ from $1$ to $r_m$:

\hspace{12pt} Recursively enumerate the set $\{[\af]j_i:[\af]\in\langle[\lf_1],\ldots,[\lf_{m-1}]\rangle\}$.
\end{enumerate}
\end{algorithm}
In general there are two distinct choices for $j_2$, but either will do.
Once $j_2$ is chosen, $j_3,\ldots,j_{r_m}$ are determined.  The sequence
$(j_1,\ldots,j_{r_m})$ corresponds to a path of $\ell_m$-isogenies;
we call this path an $\ell_m$-\emph{thread}.

The choice of $j_2$ in Step 1 may change the order in which $\EllD(\Fp)$ is
enumerated.  Three of the sixteen possibilities when $m=2$,
$r_1=4$, and $r_2=3$ are shown below; we assume
$[\lf_2^3]=[\lf_1]$, and label each vertex $[\lf_2^e]j_1$ by
the exponent~$e$.

\begin{center}
\begin{tikzpicture}
\scriptsize
  \tikzstyle{vertex}=[circle,draw=black,fill=black!15,minimum size=9pt,inner sep=0pt]

  \foreach \name/\x/\y in {0/0.0/1.6, 3/0.8/1.6, 6/1.6/1.6,  9/2.4/1.6,
  												 1/0.0/0.8, 4/0.8/0.8, 7/1.6/0.8, 10/2.4/0.8,
  												 2/0.0/0.0, 5/0.8/0.0, 8/1.6/0.0, 11/2.4/0.0}
    \node[vertex] (G-\name) at (\x,\y) {$\name$};
  \foreach \from/\to in {0/1,0/3,1/4,2/5}
    \draw[line width=1.0pt,->] (G-\from) -- (G-\to);
  \foreach \y/\z in {1.23/black,0.43/black} \node at (-0.15,\y) {{\color{\z}$\lf_2$}};
  \foreach \x/\z in {0.4/black,1.2/black,2.0/black} \node at (\x,1.76) {{\color{\z}$\lf_1$}};
  \foreach \x/\z in {0.4/black,1.2/black,2.0/black} \node at (\x,0.95) {{\color{\z}$\lf_1$}};
  \foreach \x/\z in {0.4/black,1.2/black,2.0/black} \node at (\x,0.15) {{\color{\z}$\lf_1$}};
  \foreach \from/\to in {1/2,3/6,6/9,4/7,7/10,5/8,8/11}
    \draw[black,->] (G-\from) -- (G-\to);

  \foreach \name/\x/\y in {0/4.4/1.6, 9/5.2/1.6, 6/6.0/1.6,  3/6.8/1.6,
  												 1/4.4/0.8, 10/5.2/0.8, 7/6.0/0.8, 4/6.8/0.8,
  												 2/4.4/0.0, 5/5.2/0.0, 8/6.0/0.0, 11/6.8/0.0}
    \node[vertex] (G-\name) at (\x,\y) {$\name$};
  \foreach \from/\to in {0/1,0/9,1/10,2/5}
    \draw[line width=1.0pt,->] (G-\from) -- (G-\to);
  \foreach \y/\z in {1.23/black,0.43/black} \node at (4.25,\y) {{\color{\z}$\lf_2$}};
  \foreach \x/\z in {4.8/black,5.6/black,6.4/black} \node at (\x,1.79) {{\color{\z}$\bar\lf_1$}};
  \foreach \x/\z in {4.8/black,5.6/black,6.4/black} \node at (\x,0.98) {{\color{\z}$\bar\lf_1$}};
  \foreach \x/\z in {4.8/black,5.6/black,6.4/black} \node at (\x,0.15) {{\color{\z}$\lf_1$}};
  \foreach \from/\to in {1/2,9/6,6/3,10/7,7/4,5/8,8/11}
    \draw[black,->] (G-\from) -- (G-\to);

  \foreach \name/\x/\y in {0/8.8/1.6, 3/9.6/1.6, 6/10.4/1.6,  9/11.2/1.6,
  												 11/8.8/0.8, 8/9.6/0.8, 5/10.4/0.8, 2/11.2/0.8,
  												 10/8.8/0.0, 1/9.6/0.0, 4/10.4/0.0, 7/11.2/0.0}
    \node[vertex] (G-\name) at (\x,\y) {$\name$};
  \foreach \from/\to in {0/11,0/3,11/8,10/1}
    \draw[line width=1.0pt,->] (G-\from) -- (G-\to);
  \foreach \y/\z in {1.25/black,0.45/black} \node at (8.65,\y) {{\color{\z}$\bar\lf_2$}};
  \foreach \x/\z in {9.2/black,10.0/black,10.8/black} \node at (\x,1.76) {{\color{\z}$\lf_1$}};
  \foreach \x/\z in {9.2/black,10.0/black,10.8/black} \node at (\x,0.98) {{\color{\z}$\bar\lf_1$}};
  \foreach \x/\z in {9.2/black,10.0/black,10.8/black} \node at (\x,0.15) {{\color{\z}$\lf_1$}};
  \foreach \from/\to in {11/10,3/6,6/9,8/5,5/2,1/4,4/7}
    \draw[black,->] (G-\from) -- (G-\to);

\normalsize
\end{tikzpicture}
\end{center}

Bold edges indicate where a choice was made.  Regardless of these choices,
Algorithm~\ref {alg:crtjenum} correctly enumerates
$\EllD(\Fp)$ in every case \cite[Prop.~5]{Sutherland:HilbertClassPolynomials}.

\subsection {Finding roots with greatest common divisors (gcds)}
\label {ssec:gcd}

The potentially haphazard manner in which Algorithm~\ref {alg:crtjenum}
enumerates $\EllD(\Fp)$ is
not a problem when computing $H_D$, but it can complicate matters when we
wish to compute other class polynomials.  We could distinguish the actions
of $\lf$ and $\bar\lf$ using an Elkies kernel polynomial \cite{Elkies98},
as suggested in \cite[\S 5]{Broker:pAdicClassPolynomial}, however this slows
down the algorithm significantly.  An alternative approach using polynomial
gcds turns out to be much more efficient, and actually speeds up
Algorithm~\ref {alg:crtjenum}, making it already a useful improvement when
computing $H_D$.

We need not distinguish the actions of $\lf$ and $\bar\lf$ at this stage,
but we wish to ensure that our enumeration of $\EllD(\Fp)$ makes a consistent
choice of direction each time it starts an $\ell$-thread.
The first $\ell$-thread may be oriented arbitrarily, but for each subsequent
$\ell$-thread $(j_1',j_2',\ldots,j_r')$, we apply Lemma~\ref{gcdlemma} below.
This allows us to ``square the corner"  by choosing $j_2'$ as the unique
common root of $\rmPhi_\ell(X,j_1')$ and $\rmPhi_{\ell'}(X,j_2)$, where
$(j_1,\ldots,j_r)$ is a previously computed $\ell$-thread and $j_1$ is
$\ell'$-isogenous to $j_1'$.  The edge $(j_1,j_1')$ lies in an
$\ell'$-thread that has already been computed, for some $\ell'>\ell$.

\begin{center}
\begin{tikzpicture}
\scriptsize
  \tikzstyle{vertex}=[circle,draw,fill=black!15,minimum size=12pt,inner sep=0pt]

  \foreach \name/\lab/\x/\y in {1/j_1/0.0/1.6, 2/j_2/0.8/1.6, 3/j_3/1.6/1.6,  r/j_r/3.3/1.6,
  												      11/j_1'/0.0/0.8, 22/j_2'/0.8/0.8}
    \node[vertex] (G-\name) at (\x,\y) {$\lab$};
  \node (G-4) at (2.3,1.6) {};
  \node (G-dots) at (2.5,1.6) {$\cdots$};
  \node (G-rm1) at (2.6,1.6) {};
  \foreach \from/\to in {1/11,1/2}
    \draw[->] (G-\from) -- (G-\to);
  \foreach \x in {-0.15,0.65} \node at (\x,1.22) {$\lf'$};
  \foreach \x in {0.4,1.2,2.0,2.9} \node at (\x,1.75) {$\lf$};
  \node at (0.4,0.95) {{\color{black}$\lf$}};
  \foreach \from/\to in {2/3,3/4,rm1/r}
    \draw[black,->] (G-\from) -- (G-\to);
  \foreach \from/\to in {11/22,2/22}
    \draw[dashed,->] (G-\from) -- (G-\to);

  \foreach \name/\lab/\x/\y in {1/j_1/6.0/1.6, 2/j_2/6.8/1.6, 3/j_3/7.6/1.6,  r/j_r/9.3/1.6,
  												      11/j_1'/6.0/0.8, 22/j_2'/6.8/0.8, 33/j_3'/7.6/0.8, rr/j_r'/9.3/0.8}
    \node[vertex] (G-\name) at (\x,\y) {$\lab$};
  \node (G-4) at (8.3,1.6) {};
  \node (G-dots) at (8.5,1.6) {$\cdots$};
  \node (G-rm1) at (8.6,1.6) {};
  \node (G-44) at (8.3,0.8) {};
  \node (G-ddots) at (8.5,0.8) {$\cdots$};
  \node (G-rrm1) at (8.6,0.8) {};
  \foreach \from/\to in {1/11,1/2}
    \draw[->] (G-\from) -- (G-\to);
  \foreach \x in {5.85,6.65,7.45,9.15} \node at (\x,1.22) {$\lf'$};
  \foreach \x in {6.4,7.2,8.0,8.9} \node at (\x,1.75) {$\lf$};
  \foreach \x in {6.4,7.2,8.0,8.9} \node at (\x,0.95) {$\lf$};
  \foreach \from/\to in {2/3,3/4,rm1/r}
    \draw[black,->] (G-\from) -- (G-\to);
  \foreach \from/\to in {11/22,2/22,22/33,3/33,33/ddots,rrm1/rr,r/rr}
    \draw[dashed,->] (G-\from) -- (G-\to);

\normalsize
\end{tikzpicture}
\end{center}

Having computed $j_2'$, we could compute $j_3',\ldots,j_r'$ as before,
but it is usually better to continue using gcds, as depicted above.
Asymptotically, both root-finding and gcd computations are
dominated by the $O(\ell^2 \M (\log p))$ time it takes to instantiate $\rmPhi_{\ell}(X,j_i)\bmod p$,
but in practice $\ell$ is small, and we effectively gain a factor
of $O(\log p)$ by using gcds when $\ell\approx \ell'$.
This can substantially reduce the running time of Algorithm~\ref {alg:crtjenum},
as may be seen in Table~\ref {tab1} of~\S\ref {sec:implementation}.

With the gcd approach described above, the total number of root-finding operations
can be reduced from $\prod_{k=1}^m r_k$ to $\sum_{k=1}^m r_k$.  When $m$ is large,
this is a big improvement, but it is no help when $m=1$, as necessarily occurs
when $h(D)$ is prime. However, even in this case we can apply gcds by looking
for an auxiliary ideal $\lf_1'$, with prime norm $\ell_1'$, for which
$[\lf_1']=[\lf_1^{e}]$.  When $r_1$ is large, such an $\lf_1'$ is easy to
find, and we may choose the best combination of $\ell_1'$ and $e$ available.  This
idea generalises to $\ell_k$-threads, where we seek
$[\lf_k']\in\langle[\lf_1]\ldots,[\lf_k]\rangle
\backslash \langle[\lf_1]\ldots,[\lf_{k-1}]\rangle$.

\begin{lemma}\label{gcdlemma}
Let $j_1,j_2\in\EllD(\Fp)$, and let $\ell_1,\ell_2\ne p$ be distinct primes with
$4\ell_1^2\ell_2^2 < |D|$.  Then $\gcd\bigl(\rmPhi_{\ell_1}(j_1,X),\rmPhi_{\ell_2}(j_2,X)\bigr)$
has degree at most~$1$.
\end{lemma}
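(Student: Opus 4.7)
The plan is to prove the contrapositive: if $\gcd(\rmPhi_{\ell_1}(j_1,X),\rmPhi_{\ell_2}(j_2,X))$ has two distinct roots $j \neq j'$, then $|D| \leq 4\ell_1^2\ell_2^2$. Given such a common root $j$, the defining property of $\rmPhi_\ell$ provides $\ell_1$- and $\ell_2$-isogenies $\phi_1\colon E_1 \to E$ and $\phi_2\colon E_2 \to E$, where $E/\Fp$ satisfies $j(E)=j$; similarly $j'$ yields $\phi_1'\colon E_1 \to E'$ and $\phi_2'\colon E_2 \to E'$. Composing with duals, I would form the isogenies $\psi := \hat\phi_2 \circ \phi_1$ and $\psi' := \hat{\phi_2'} \circ \phi_1'$ from $E_1$ to $E_2$, each of degree $\ell_1\ell_2$.

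The crux is to analyse the endomorphism $\beta := \hat{\psi'} \circ \psi \in \End(E_1) \cong \O$, whose degree---equal to its norm as an element of $\O$---is $\ell_1^2\ell_2^2$. Writing $\beta = (a + b\sqrt{D})/2$ with $a,b \in \Z$ of appropriate parity, the norm equation reads $a^2 + b^2|D| = 4\ell_1^2\ell_2^2$. The hypothesis $4\ell_1^2\ell_2^2 < |D|$ then forces $b = 0$, so $\beta$ is the scalar endomorphism $[\pm\ell_1\ell_2]$ on $E_1$.

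Since also $\hat\psi \circ \psi = [\deg \psi] = [\ell_1\ell_2]$, combining yields $(\hat{\psi'} \mp \hat\psi)\circ\psi = 0$; surjectivity of $\psi$ then gives $\psi' = \pm\psi$, so $\ker\psi = \ker\psi'$. The $\ell_1$-primary component of this kernel, which is well-defined because $\gcd(\ell_1,\ell_2)=1$, coincides with both $\ker\phi_1$ and $\ker\phi_1'$, hence $E \cong E_1/\ker\phi_1 \cong E_1/\ker\phi_1' \cong E'$, contradicting $j \neq j'$.

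The only substantive step is the norm-form computation; the remainder is formal manipulation of isogenies and their duals. Note that the inequality $4\ell_1^2\ell_2^2 < |D|$ is used exactly once, and sharply so, to eliminate a nonzero coefficient $b$; there is no slack to exploit for a stronger statement about higher-degree gcds with this argument.
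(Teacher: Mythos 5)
Your proof is correct, and it takes a genuinely different route from the paper. The paper argues ideal-theoretically: it first invokes Kohel's structure theorem for $\rmPhi_\ell \bmod p$ to conclude that the common roots lie in $\EllD(\Fp)$ and that there are at most two of them, then translates two common roots into a principal ideal $\lf_1^2\lf_2^2$ or $\lf_1^2\bar\lf_2^2$ with a non-rational generator, which directly yields the norm equation $a^2 - b^2D = 4\ell_1^2\ell_2^2$ with $b\ne 0$. You instead work entirely at the level of isogenies and dual isogenies: composing the four $\ell_i$-isogenies into two maps $\psi,\psi'\colon E_1\to E_2$ and forming $\beta=\hat\psi'\circ\psi\in\End(E_1)\cong\O$, then showing that the hypothesis forces $\beta=[\pm\ell_1\ell_2]$ and hence $\psi'=\pm\psi$, whence $\ker\phi_1=\ker\phi_1'$ and $E\cong E'$. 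The norm-form inequality at the heart of both arguments is identical, but your derivation is more self-contained: it avoids any appeal to the volcano structure or the Deuring/class-group correspondence, and does not need to establish that the common roots lie in $\EllD(\Fp)$. The price is that you must carry out the dual-isogeny bookkeeping explicitly and extract $\ker\phi_1$ as the $\ell_1$-primary part of $\ker\psi$, steps the paper bypasses by quoting the class-group formalism. One small point of care: as stated you treat two \emph{distinct} common roots, whereas "degree at most $1$" in principle also rules out a double common root; the same norm argument applied to $\hat\phi_1'\circ\phi_1\in\End(E_1)$ of degree $\ell_1^2$ handles that case (and the hypothesis $4\ell_1^2\ell_2^2<|D|$ already forces $4\ell_1^2<|D|$), so this is an omission rather than an error, and the paper's phrasing has the same implicit reliance on distinctness.
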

\begin{proof}
It follows from \cite[Prop.~23]{Kohel:Thesis} that
$\rmPhi_{\ell_1}(X,j_1)$ and $\rmPhi_{\ell_2}(X,j_2)$ have at most two common
roots in the algebraic closure $\overline \F_p$, which in fact lie in $\EllD(\Fp)$.
If there are exactly two, then both $\ell_1 = \lf_1 \overline\lf_1$ and
$\ell_2 = \lf_2 \overline\lf_2$ split in $\O$, and one of
$\lf_1^2\lf_2^2$ or $\lf_1^2\bar\lf_2^2$ is principal with a
non-rational generator.  We thus have a norm equation
$4\ell_1^2\ell_2^2=a^2-b^2D$ with $a,b \in \Z$ and
$b\ne 0$, and the lemma follows.
\end{proof}

\section{Class invariants}
\label {sec:invariants}

Due to the large size of $H_D$, much effort has been spent seeking
smaller generators of $K_\O$.
For a modular function $f$ and $\O=\Z[\tau]$, with $\tau$ in the upper half plane,
we call $f (\tau)$ a
\textit {class invariant} if $f (\tau) \in K_\O$.
The \textit {class polynomial} for $f$ is
\[
H_D [f] (X) = \prod_{[\af] \in \Cl (\O)} \left(  X - [\af] f (\tau) \right).
\]
The contemporary tool for determining class invariants is Shimura's reciprocity
law; see \cite[Th.~4]{Schertz02} for a fairly general result.  Class invariants
arising from many different modular functions have been described in the literature;
we briefly summarise some of the most useful ones.

Let $\eta$ be Dedekind's function, and  let $\zeta_n=\exp(2\pi i/n)$.
Weber considered
\[
\f = \zeta_{48}^{-1} \frac {\eta \left( \frac {z+1}{2} \right)}{\eta (z)},
\qquad \f_1 (z) = \frac {\eta \left( \frac {z}{2} \right)}{\eta (z)},
\qquad \f_2 (z) = \sqrt 2 \, \frac {\eta (2 z)}{\eta (z)},
\]
powers of which yield class invariants when $\legendre {D}{2} \neq -1$, and also
$\gamma_2 = \sqrt[3] j$, which is a class invariant whenever $3\nmid D$.
The Weber functions can be generalised \cite {EnMo09,EnSc04,GeSt98,Gee01,HaVi97},
and we have the simple and double $\eta$-quotients
\[
\w_N (z) = \frac {\eta \left( \frac {z}{N} \right)}{\eta (z)};\qquad\qquad
\w_{p_1, p_2} = \frac {\eta \left( \frac {z}{p_1} \right) \eta \left( \frac
{z}{p_2} \right)}{\eta \left( \frac {z}{p_1 p_2} \right) \eta (z)}
\text { with } N = p_1 p_2,
\]
where $p_1$ and $p_2$ are primes.  Subject to constraints on $D$, including
that no prime dividing $N$ is inert in $\O$,
suitable powers of these functions yield class invariants, see
\cite{EnMo09,EnSc04}.
For $s = 24/\gcd \bigl(24, (p_1 -1)(p_2 - 1)\bigr)$, the canonical power
$\w_{p_1, p_2}^s$ is invariant under the Fricke involution $\WN:z \mapsto \frac {-N}{z}$
for $\rmGamma^0 (N)$, equivalently, the Atkin-Lehner involution of level $N$, by \cite[Thm.~2]{EnSc05}.

The theory of \cite {Schertz02} applies to any functions for
$\rmGamma^0 (N)$, in particular to those of prime level $N$
invariant under the Fricke involution, which yield class invariants
when $\legendre {D}{N} \neq -1$.  Atkin developed a method
to compute such functions $A_N$, which are conjectured to have a pole of minimal order
at the unique cusp \cite{Elkies98,Morain95}.  These
are used in the SEA algorithm, and can be found in
\textsc {Magma} or \textsc {Pari/GP}.

The functions above all yield algebraic integers, so $H_D [f] \in \O_K[X]$.
Except for $\w_N^e$ or when $\gcd (N, D) \neq 1$, in which cases
additional restrictions may apply, one actually has $H_D [f] \in \Z [X]$,
cf.~\cite[Cor.~3.1]{EnSc04}.  The (logarithmic) \textit{height} of
$H_D [f]=\sum a_iX^i$ is $\log\max|a_i|$, which determines
the precision needed to compute the $a_i$.  We let
$c_D(f)$ denote the ratio of the heights of $H_D[j]$ and $H_D [f]$.

With $c(f)=\lim_{|D|\to\infty}c_D(f)$, we have: $c(\gamma_2)=3$;
$c(\f)=72$ (when $\legendre {D}{2} = 1$);
\[
c(\w^e_N)=\frac{24(N+1)}{e(N-1)};\qquad
c(\w_{p_1,p_2}^s)=\frac{12\psi(p_1p_2)}{s(p_1-1)(p_2-1)};\qquad
c(A_N)=\frac{N+1}{2|v_N|},
\]
where
$e$ divides the exponent~$s$ defined above,
$v_N$ is the order of the pole of $A_N$ at the cusp,
and $\psi(p_1p_2)$ is $(p_1+1)(p_2+1)$ when $p_1\ne p_2$, and $p_1(p_1+1)$
when $p_1=p_2$.  Morain observed in \cite{Morain09} that $c(A_{71})=36$, which
is so far the best value known when $\legendre {D}{2} = -1$.  We conjecture that
in fact for all primes $N > 11$ with $N \equiv 11 \bmod{60}$ we have
$c(A_N) = 30 \frac {N+1}{N-11}$, and that for $N \equiv -1 \bmod {60}$
we have $c(A_N)=30$.
This implies that given an arbitrary discriminant $D$, we can
always choose $N$ so that $A_N$ yields class invariants with $c_D(A_N)\ge 30+o(1)$.

When the prime divisors of $N$ are all ramified in $K$, both $\w_{p_1,p_2}$ and $A_N$
yield class polynomials that are squares in $\Z [X]$, see
\cite[\S 1.6]{Enge07} and \cite{EnSc10}. Taking the square root of such
a class polynomial reduces both its degree and its height by a factor of 2.
For a composite fundamental discriminant~$D$
(the most common case), this applies to $H_D[A_N]$ for any prime $N \mid D$.  In the
best case, $D$ is divisible by 71, and we obtain a class polynomial that is
144 times smaller than $H_D$.

\subsection {Modular polynomials}

Each function $f(z)$ considered above is related to $j(z)$ by a
modular polynomial $\rmPsi_f\in\Z [F, J]$ satisfying $\rmPsi_f (f (z), j (z)) = 0$.
For primes $\ell$ not dividing the level $N$, we let $\rmPhi_{\ell, f}$
denote the minimal polynomial satisfying $\rmPhi_{\ell, f} (f (z), f (\ell z)) = 0$;
it is a factor of
$
\Res_{J_\ell} \bigl( \Res_J (\rmPhi_\ell (J, J_\ell), \rmPsi_f (F, J)),
\rmPsi_f (F_\ell, J_\ell) \bigr),
$
and as such, an element of $\Z [F, F_\ell]$. Thus $\rmPhi_{\ell, f}$ generalises the
classical modular polynomial $\rmPhi_\ell=\rmPhi_{\ell,j}$.

The polynomial $\rmPhi_{\ell,f}$ has degree $d(\ell+1)$ in $F$ and $F_\ell$,
where $d$ divides $\deg_J\rmPsi_f$, see~\cite[\S6.8]{Broker:Thesis},
and $2d$ divides $\deg_J\rmPsi_f$ when $f$ is invariant under the Fricke involution.
In general, $d$ is maximal, and $d = 1$ is achievable only in
the relatively few cases where $X_0 (N)$, respectively $X_0^+ (N)$, is of genus~$0$
and, moreover, $f$ is a hauptmodul, that is, it generates the function field of the
curve.  Happily, this includes many cases of practical interest.

The polynomial $\rmPsi_f$ characterises the analytic function
$f$ in an algebraic way; when $d=1$, the polynomials $\rmPhi_\ell$ and $\rmPhi_{\ell, f}$
algebraically characterise $\ell$-isogenies between elliptic curves given by their
$j$-invariants, or by class invariants derived from $f$, respectively.
These are key ingredients for the CRT method.

\section {CRT algorithms for class invariants}
\label {sec:crtinv}

To adapt Algorithm~\ref{alg:crtj} to class invariants arising from a modular function
$f(z)$ other than $j(z)$, we only need to consider Algorithm~\ref{alg:crtjenum}.  Our
objective is to enumerate the roots of $H_D[f]\bmod p$ for
suitable primes $p$, which we are free to choose.  This may be done in
one of two ways.  The most direct approach computes
an ``$f$-invariant" $f_1$, corresponding to $j_1$, then enumerates
$f_2,\ldots,f_h$ using the modular polynomials $\rmPhi_{\ell,f}$.
Alternatively, we may enumerate $j_1,\ldots,j_h$ as before, and from these derive
$f_1,\ldots,f_h$.  The latter approach is not as efficient, but it applies
to a wider range of functions, including two infinite families.

Several problems arise. First, an elliptic curve $E/\Fp$ with CM by~$\O$
unambiguously defines a $j$-invariant~$j_1=j(E)$, but not the corresponding $f_1$.
The $f_1$ we seek is a root of $\psi_f(X) = \rmPsi_f (X, j_1) \bmod p$,
but $\psi_f$ may have other roots, which may or may not be class invariants.
The same problem occurs for the $p$-adic lifting algorithm and can be solved
generically \cite[\S 6]{Broker:Thesis}; we describe some more efficient solutions,
which are in part specific to certain types of functions.

When $\psi_f$ has multiple roots that are class invariants, these may
be roots of distinct class polynomials.
We are generally happy to compute any one of these, but it is
imperative that we compute the reduction of ``the same" class polynomial
$H_D[f]$ modulo each prime $p$.

The lemma below helps to address these issues for at least two infinite
families of functions: the double $\eta$-quotients $\w_{p_1,p_2}$
and the Atkin functions $A_N$.

\begin{lemma}
\label{lemma:psiroots}
Let $f$ be a modular function for $\rmGamma^0 (N)$, invariant under the Fricke
involution $\WN$, such that $f (z)$ and $f \left( \frac {-1}{z} \right)$ have
rational $q$-expansions.  Let the imaginary quadratic order $\O$ have conductor
coprime to $N$ and contain an ideal $\nf = \bigl( N, \frac {B_0 + \sqrt D}{2} \bigr)$.
Let $A_0 = \frac {B_0^2 - D}{4 N}$ and $\tau_0 = \frac {-B_0 + \sqrt D}{2 A_0}$,
and assume that $\gcd (A_0, N) = 1$.
Then $f (\tau_0)$ is a class invariant, and if $f (\tau)$ is any of its conjugates
under the action of $\Gal(K_\O/K)$
we have
\[
\rmPsi_f\bigl(f(\tau),j(\tau)\bigr)=0\qquad{\text{and}}\qquad
\rmPsi_f\bigl(f(\tau),[\nf]j(\tau)\bigr)=0.
\]
\end{lemma}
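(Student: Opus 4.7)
The plan is to split the lemma into three pieces. First, that $f(\tau_0)$ is a class invariant follows immediately from Schertz's reciprocity theorem \cite[Thm.~4]{Schertz02}, applied to the stated hypotheses: $f$ is a $\rmGamma^0(N)$-function invariant under $\WN$ with rational $q$-expansions at both cusps, $\nf = (N, (B_0+\sqrt{D})/2)$ is an $\O$-ideal of norm $N$, and $\gcd(A_0, N) = 1$. Second, the identity $\rmPsi_f(f(\tau), j(\tau)) = 0$ is the defining property of $\rmPsi_f$ as the minimal polynomial satisfying $\rmPsi_f(f(z), j(z)) = 0$ identically on $\HH$, so it holds at any $\tau$ where $f$ and $j$ are defined. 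The substance of the lemma is thus the third part, $\rmPsi_f(f(\tau), [\nf]j(\tau)) = 0$.

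The key observation for the third identity is that $\WN$-invariance of $f$ produces a companion to the defining identity: substituting $w = -N/z$ into $\rmPsi_f(f(w), j(w)) = 0$ and using $f(-N/z) = f(z)$ together with $j(-N/z) = j(z/N)$ (from $\mathrm{SL}_2(\Z)$-invariance of $j$ under $\tau \mapsto -1/\tau$) yields
\[
\rmPsi_f\bigl(f(z), j(z/N)\bigr) = 0
\]
identically in $z \in \HH$. It then remains to identify $j(\tau_0/N)$ with $[\nf]j(\tau_0)$ by lattice bookkeeping. The definition of $\tau_0$ yields $A_0\tau_0^2 + B_0\tau_0 + N = 0$; writing $\alpha = (B_0+\sqrt{D})/2 = B_0 + A_0\tau_0$ and $\bar\alpha = -A_0\tau_0$, one computes $\alpha\tau_0 = -N$ and $\bar\alpha\tau_0 = B_0\tau_0 + N$. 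The $\Z$-generators of $\bar\nf = N\O + \bar\alpha\O$, expressed in coordinates $(a,b)$ meaning $a + b\tau_0$, are $(N,0)$, $(0,N)$, $(0,-A_0)$, $(N,B_0)$, which collapse via $\gcd(A_0,N) = 1$ to the basis $\{N, \tau_0\}$. Hence $\nf^{-1} = \bar\nf/N = \Z + (\tau_0/N)\Z$, so $j(\tau_0/N) = j(\nf^{-1}) = [\nf]j(\O) = [\nf]j(\tau_0)$, where the penultimate equality uses $\nf\bar\nf = N\O$ principal to give $[\bar\nf^{-1}] = [\nf]$ in $\Cl(\O)$. Specialising the companion identity at $z = \tau_0$ then gives $\rmPsi_f(f(\tau_0), [\nf]j(\tau_0)) = 0$.

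For an arbitrary conjugate $f(\tau)$, the identity at $\tau_0$ extends by Galois equivariance. Choose $\sigma \in \Gal(K_\O/K)$ with $\sigma f(\tau_0) = f(\tau)$; under the Artin isomorphism with $\Cl(\O)$ one may pick $\tau$ so that simultaneously $\sigma j(\tau_0) = j(\tau)$. Since $\rmPsi_f \in \Z[F,J]$ and $\sigma$ commutes with the $[\nf]$-action (the class group being abelian), applying $\sigma$ to $\rmPsi_f(f(\tau_0), [\nf]j(\tau_0)) = 0$ delivers the claim. The main obstacle is the lattice computation in the second paragraph: the coprimality $\gcd(A_0, N) = 1$ is precisely what collapses the four generators of $\bar\nf$ to the basis $\{N, \tau_0\}$ and thereby yields the pivotal equation $j(\tau_0/N) = [\nf]j(\tau_0)$ on which the whole argument rests.
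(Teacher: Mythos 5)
Your high-level strategy matches the paper's: apply the Fricke involution to the defining identity to obtain $\rmPsi_f\bigl(f(z),j(z/N)\bigr)=0$, cite Schertz for class invariance, then translate $j(\tau/N)$ into $[\nf]j(\tau)$ by ideal arithmetic. Where you differ is in the treatment of general conjugates: you do the ideal computation only at $\tau_0$ and propagate by Galois equivariance, whereas the paper invokes Schertz's theorem once more to write every conjugate as $f(\tau)$ with $\tau$ the basis quotient of an ideal $\af=\bigl(A,\frac{-B+\sqrt D}{2}\bigr)$ satisfying $\gcd(A,N)=1$ and $B\equiv B_0\bmod 2N$; then $\af\bar\nf=\bigl(AN,\frac{-B+\sqrt D}{2}\bigr)$ has basis quotient $\tau/N$, giving $[\nf]j(\tau)=j(\tau/N)$ directly and uniformly, with no separate Galois bookkeeping. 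Your version needs the assertion that $\tau$ can be chosen so that $\sigma j(\tau_0)=j(\tau)$ and $\sigma f(\tau_0)=f(\tau)$ hold simultaneously; this is precisely the content of Schertz's theorem and should be cited rather than asserted.

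There is also a genuine bug in your lattice computation, though it is masked by a compensating error. You list $(0,N)$ and $(N,B_0)$, i.e.\ $N\tau_0$ and $N+B_0\tau_0$, among the $\Z$-generators of $\bar\nf=N\O+\bar\alpha\O$. But $\O=\Z+\Z A_0\tau_0$, so every element of $\O$ has the form $a+bA_0\tau_0$; since $\gcd(A_0,N)=1$, the element $N\tau_0$ lies in $\O$ only if $A_0=1$, which fails generically (the paper works with $|D|>4N^2$, forcing $A_0>N$). The correct generators are $(N,0)$, $(0,NA_0)$, $(0,-A_0)$, $(A_0N,A_0B_0)$, which collapse to $\bar\nf=N\Z+A_0\tau_0\Z$, not $N\Z+\tau_0\Z$. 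Your derivation still lands on the right answer only because you also write $j(\O)=j(\tau_0)$, which is false for $A_0>1$: in fact $j(\tau_0)=j(\Z+\Z\tau_0)=j(\af_0)$ with $\af_0=\bigl(A_0,\frac{-B_0+\sqrt D}{2}\bigr)$. The two $A_0$-factors cancel, so the conclusion $[\nf]j(\tau_0)=j(\tau_0/N)$ is correct, but the intermediate steps are not. The clean statement is that $\af_0\bar\nf=\bigl(A_0N,\frac{-B_0+\sqrt D}{2}\bigr)$ has basis quotient $\tau_0/N$, whence $j(\tau_0/N)=[\nf]j(\af_0)=[\nf]j(\tau_0)$.
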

\begin{proof}
By definition, $\rmPsi_f \bigl(f (z), j (z)\bigr) = 0$.  Applying
the Fricke involution yields
$
0 =
\rmPsi_f \left( (\WN f) (z), (\WN j) (z) \right) =
\rmPsi_f \left( f (z), j \left( \frac {-N}{z} \right) \right) =
\rmPsi_f \left( f (z), j \left( \frac {z}{N} \right) \right).
$
The value $f (\tau_0)$ is a class invariant by \cite[Th.~4]{Schertz02}.
By the same result, we may assume that $\tau$ is the basis quotient of an ideal
$\af = \bigl( A, \frac {-B + \sqrt D}{2} \bigr)$ with
$\gcd (A, N) = 1$ and
$B \equiv B_0 \bmod {2 N}$.  Then $\frac {\tau}{N}$ is the basis quotient of
$\af \overline\nf = \bigl( AN, \frac {-B + \sqrt D}{2} \bigr)$.
It follows that $[\nf] j (\tau) = j \left( \frac {\tau}{N} \right)$, and replacing $z$ above by $\tau$
completes the proof.
\end{proof}

If we arrange the roots of $H_D$ into a graph of $\nf$-isogeny cycles corresponding
to the action of $\nf$, the lemma yields a dual graph defined on the
roots of $H_D[f]$, in which vertices $f(\tau)$ correspond to edges
$\bigl(j(\tau),[\nf]j(\tau)\bigr)$.

In computational terms, $f (\tau)$ is a root
of $\gcd \big( \rmPsi_f\bigl(X,j(\tau)\bigr), \rmPsi_f\bigl(X,[\nf]j(\tau)\bigr) \big)$.
Generically, we expect this gcd to have no other roots modulo primes $p$ that
split completely in $K_\O$.
For a finite number of such primes, there may be additional roots.
We have observed this for $p$ dividing the conductor of the order generated by
$f (\tau)$ in the maximal order of $K_\O$.
Such primes may either be excluded from our CRT computations, or addressed by one of the techniques
described in \S\ref{ssec:indirect}.

\subsection {Direct enumeration}
\label {ssec:direct}

When the polynomials $\rmPhi_{\ell,f}$ have degree $\ell+1$
we can apply Algorithm~\ref{alg:crtjenum} with essentially no modification;
the only new consideration is that $\ell$ must not divide the level $N$,
but we can exclude such $\ell$ when choosing a polycyclic presentation for
$\Cl(\O)$.  When the degree is greater than $\ell+1$ the situation is more
complex, moreover the most efficient algorithms for computing modular
polynomials do not apply \cite{BrLaSu09,Enge09mod}, making it difficult
to obtain $\rmPhi_{\ell, f}$ unless $\ell$ is very small.  Thus in
practice we do not use $\rmPhi_{\ell,f}$ in this case; instead we apply the
methods of \S\ref{ssec:indirect} or \S\ref{ssec:general}.
For the remainder of this subsection and the next we assume that we do have
polynomials $\rmPhi_{\ell,f}$ of degree $\ell+1$ with which to enumerate
$f_1,\ldots,f_h$, and consider how to determine a starting point $f_1$, given
the $j$-invariant $j_1=j(E)$ of an elliptic curve $E/\Fp$ with CM by $\O$.

When $\psi_f(X) = \rmPsi_f (X, j_1) \bmod p$ has only one root,
our choice of $f_1$ is immediately determined.  This is usually not
the case, but we may be able to ensure it by restricting our choice
of~$p$. As an example, for $f = \gamma_2$ with $3\nmid D$, if we
require that $p \equiv 2 \bmod 3$, then $f_1$ is the unique
cube root of~$j_1$ in $\Fp$.  If we additionally have $D\equiv 1\bmod 8$
and $p\equiv 3\bmod 4$, then the equation $\gamma_2 = (\f^{24}-16)/\f^8$
uniquely determines the square of the Weber $\f$ function, by \cite[Lem.~7.3]{BrLaSu09}.
To treat $\f$ itself we need an additional trick described in \S\ref{ssec:tracetrick}.

The next simplest case occurs when only one of the roots of $\psi_f$ is
a class invariant.  This necessarily happens when $f$ is invariant under
the Fricke involution and all the primes dividing $N$ are ramified in~$\O$.
In the context of Lemma~\ref{lemma:psiroots}, each root of $H_D[f]$ then
corresponds to an isolated edge $\big(j(\tau),[\nf]j(\tau)\bigr)$
in the $\nf$-isogeny graph on the roots of $H_D$, and we
compute $f_1$ as the unique root of $\gcd\bigl(\rmPsi_f(X,j_1),\rmPsi_f(X,[\nf]j_1)\bigr)$.
In this situation $\nf=\bar\nf$, and each $f(\tau)$ occurs twice as a root
of $H_D[f]$.  By using a polycyclic presentation for $\Cl(\O)/\langle[\nf]\rangle$
rather than $\Cl(\O)$, we enumerate each double root of $H_D[f]\bmod p$ just
once.

Even when $\psi_f$ has multiple roots that are class invariants, it may
happen that they are all roots of the \emph{same} class polynomial.
This applies to the Atkin functions $f=A_N$.  When $N$ is a split prime,
there are two $N$-isogenous pairs $(j_1,[\nf]j_1)$ and $([\bar\nf]j_1,j_1)$
in $\EllD(\Fp)$, and under Lemma~\ref{lemma:psiroots} these correspond to
roots $f_1$ and $[\bar\nf]f_1$ of $\psi_f$.  Both are roots of $H_D[f]$,
and we may choose either.

The situation is slightly more complicated for the double $\eta$-quotients
$\w_{p_1,p_2}$, with $N=p_1p_2$ composite.  If $p_1=\pf_1\bar\pf_1$
and $p_2=\pf_2\bar\pf_2$ both split and $p_1\ne p_2$, then there
are four distinct $N$-isogenies corresponding to four roots of $\psi_f$.
Two of these roots are related by the action of $[\nf]=[\pf_1\pf_2]$;
they belong to the same class polynomial, which we choose as
$H_D[f]\bmod p$. The other
two are related by $[\pf_1\bar\pf_2]$ and are roots of a
different class polynomial.
We make an arbitrary choice for $f_1$, explicitly compute $[\nf] f_1$, and
then check whether it occurs among the other three roots; if not, we correct the
initial choice. The techniques of \S\ref {ssec:indirect} may be used
to efficiently determine the action of $[\nf]$.

Listed below are some of the modular functions $f$ for which the roots of
$H_D[f]\bmod p$ may be directly enumerated, with sufficient constraints
on $D$ and~$p$.  In each case $p$ splits completely in $K_\O$ and $D<-4N^2$
has conductor $u$.
\renewcommand{\labelenumi}{(\arabic{enumi})}
\begin {enumerate}
\setlength{\itemsep}{4pt}
\item
$\gamma_2$, with $3\nmid D$ and $p\equiv 2\bmod 3$;
\item
$\f^2$, with $D\equiv1\bmod 8$, $3\nmid D$, and $p\equiv 11\bmod 12$;
\item
$\w_N^s$, for $N\in\{3,5,7,13\}$ and $s=24/\gcd(24,N-1)$, with $N \mid D$
and $N\nmid u$;
\item
$\w_5^2$, with $3\nmid D$, $5 \mid D$, and $5\nmid u$;
\item
$A_N$, for $N\in\{3,5,7,11,13,17,19,23,29,31,41,47,59,71\}$, with $\legendre{D}{N}\ne -1$ and $N\nmid u$.
\item
$\w_{p_1,p_2}^s$, for $(p_1,p_2)\in\{(2,3),(2,5),(2,7),(2,13),(3,5),(3,7),(3,13),(5,7)\}$ and
$s=24/\gcd\bigl(24,(p_1-1)(p_2-1)\bigr)$, with $\legendre{D}{p_1},\legendre{D}{p_2}\ne -1$ and
$p_1,p_2\nmid u$.
\item
$\w_{3,3}^6$ with $\legendre{D}{3}=1$ and $3\nmid u$.
\end {enumerate}

\subsection {The trace trick}
\label {ssec:tracetrick}

In \S\ref{ssec:direct} we were able to treat the square of the Weber $\f$
function but not $\f$ itself.  To remedy this, we generalise a method suggested
to us by Reinier Br\"{o}ker.

We consider the situation where there are two modular functions $f$ and $f'$ that
are roots of $\rmPsi_f(X,j(z))$, both of which yield class invariants for $\O$,
and we wish to apply the direct enumeration approach.
We assume that $p$ is chosen so that $\psi_f(X)=\rmPsi_f(X,j_1)\bmod p$ has
exactly two roots, and depending on which root we take as $f_1$, we may compute the
reduction of either $H_D[f](X)$ or $H_D[f'](X)$ modulo $p$.  In the case
of Weber $\f$, we have $f'=-f$, and $H_D[f']$ differs
from $H_D[f]$ only in the sign of every other coefficient.

Consider a fixed coefficient $a_i$ of $H_D[f](X)=\sum a_iX^i$;
most of the time, the trace $t = -a_{h-1} = f_1 + \cdots + f_h$ will do
(if $f'=-f$, we need to use $a_i$ with $i\not\equiv h\bmod 2$).
The two roots $f_1$ and $f_1'$ lead to two possibilities $t$ and $t'$
modulo~$p$.  However, the elementary symmetric functions $T_1 = t + t'$ and $T_2
= t t'$ are unambiguous modulo~$p$. Computing these modulo many primes $p$
yields $T_1$ and~$T_2$ as integers (via the CRT), from which $t$ and $t'$ are obtained
as roots of the quadratic equation $X^2 - T_1 X + T_2$. If these are different,
we arbitrarily pick one of them, which, going back,
determines the set of conjugates $\{ f_1, \ldots, f_h \}$ or $\{ f_1',
\ldots, f_h' \}$ to take modulo each of the primes $p\nmid t-t'$.  In the unlikely event
that they are the same (the suspicion $t = t'$ being confirmed after, say,
looking at the second prime), we need to switch to a different coefficient $a_i$.

If $f$ and $f'$ differ by a simple transformation (such as $f'=-f$), the
second set of conjugates and the value $t'$ are obtained essentially for free.
As a special case, when $h$ is odd and the class invariants are units (as with
Weber $\f$), we can simply fix $t = a_0=1$, and need not compute $T_1=0$ and $T_2=-1$.

The key point is that the number of primes $p$ we use to determine $t$
is much less than the number of primes we use to compute $H_D[f]$.
Asymptotically, the logarithmic height of the trace is smaller than the height
bound we use for $H_D[f]$ by a factor quasi-linear in $\log |D|$, under
the GRH.  In practical terms, determining~$t$ typically requires less than
one tenth of the primes used to compute $H_D[f]$, and these computations
can be combined.

The approach described above generalises immediately to more than two roots, but this case
does not occur for the functions we examine.  Unfortunately it can be used only
in conjunction with the direct enumeration approach of \S\ref{ssec:direct};
otherwise we would have to consistently distinguish not only
between $f_1$ and $f_1'$, but also between $f_i$ and $f_i'$
for $i = 2, \ldots, h$.

\subsection {Enumeration via the Fricke involution}
\label {ssec:indirect}
For functions $f$ to which Lemma~\ref{lemma:psiroots} applies, we
can readily obtain the roots of $H_D[f]\bmod p$ without using the polynomials
$\rmPhi_{\ell,f}$.  We instead enumerate the roots of $H_D\bmod p$
(using the polynomials $\rmPhi_\ell$), and arrange them into a graph~$G$
of $\nf$-isogeny cycles, where $\nf$ is the ideal of norm $N$ appearing
in Lemma~\ref{lemma:psiroots}.  We then obtain roots of
$H_D[f]\bmod p$ by computing $\gcd\bigl(\rmPsi_f(X,j_i),\rmPsi_f(X,[\nf]j_i)\bigr)$
for each edge $(j_i,[\nf]j_i)$ in $G$.

The graph $G$ is composed of $h/n$ cycles of length $n$, where $n$ is the
order of $[\nf]$ in $\Cl(\O)$.  We assume that the $\O$-ideals of norm $N$ are
all non-principal and inequivalent (by requiring $|D|>4N^2$ if needed).
When every prime dividing $N$ is ramified in
$\O$ we have $n=2$; as noted in \S\ref{ssec:direct}, every root of $H_D[f]$
then occurs with multiplicity~$2$, and we may compute the square-root of $H_D[f]$
by taking each root just once.  Otherwise we have $n>2$.

Let $[\lf_1],\ldots,[\lf_m]$ be a polycyclic presentation for $\Cl(\O)$
with relative orders $r_1,\ldots,r_m$, as in \S\ref{ssec:crtjenum}.  For $k$ from 1 to $m$
let us fix $\lf_k=\bigl(\ell_k,\frac{-B_k+\sqrt{D}}{2}\bigr)$ with $B_k \ge 0$.
To each vector $\vec{e}=(e_1,\ldots,e_m)$ with $0\le e_k < r_k$,
we associate a unique root $j_{\vec{e}}$ enumerated by
Algorithm~\ref{alg:crtjenum}, corresponding to the path taken from $j_1$ to~$j_{\vec{e}}$,
where $e_k$ counts steps taken along an $\ell_k$-thread.  For $\vec{o}=(0,\ldots,0)$
we have $j_{\vec{o}}=j_1$, and in general
\[
j_{\vec{e}}=[\lf_1^{\sigma_1e_1}\cdots\lf_m^{\sigma_m e_m}]j_{\vec{o}},
\]
with $\sigma_k=\pm 1$.  Using the method of \S\ref{ssec:gcd} to
consistently orient the $\ell_k$-threads ensures that each $\sigma_k$
depends only on the orientation of the first $\ell_k$-thread.

To compute the graph $G$ we must determine the signs $\sigma_k$.
For those $[\lf_k]$ of order 2, we let $\sigma_k=1$.  We
additionally fix $\sigma_k=1$ for the least $k=k_0$ (if any) for which $[\lf_k]$
has order greater than 2, since we need not distinguish the actions of
$\nf$ and $\bar{\nf}$.
It suffices to show how to determine $\sigma_k$, given that we know
$\sigma_1,\ldots,\sigma_{k-1}$.  We may assume $[\lf_{k_0}]$ and
$[\lf_k]$ both have order greater than 2, with $k_0 < k \le m$.

Let $\lf$ be an auxiliary ideal of prime norm $\ell$ such that
$[\lf]=[\af\bfrak]=[\lf_1^{e_1}\cdots\lf_k^{e_k}]$, with $0 \le e_i < r_i$,
where $\bfrak=\lf_k^{e_k}$, and $[\af]$ and $[\bfrak]$ have order greater than 2.
Our assumptions guarantee that such an $\lf$ exists, by the \v{C}ebotarev density
theorem, and under the GRH, $\ell$ is relatively small \cite{Bach:ERHbounds}.
The fact that $[\af]$ and $[\bfrak]$ have order greater than 2 ensures that
$[\af\bar\bfrak]$ is distinct from $[\lf]$ and its inverse.
It follows that $\sigma_k=1$ if and only if $\rmPhi_\ell(j_{\vec{o}},j_{\vec{e}})=0$,
where $\vec{e}=(e_1,\ldots,e_k,0,\ldots,0)$.

Having determined the $\sigma_k$, we compute the unique vector
$\vec{v}=(v_1,\ldots,v_m)$ for which $[\nf]=[\lf_1^{\sigma_1v_1}\cdots\lf_m^{\sigma_mv_m}]$.
We then have $[\nf]j_{\vec{o}}=j_{\vec{v}}$, yielding the edge $(j_{\vec{o}},j_{\vec{v}})$ of $G$.
In general, we obtain the vector corresponding to $[\nf]j_{\vec{e}}$
by computing $\vec{e}+\vec{v}$ and using relations $[\lf_k^{r_k}]=[\lf_1^{x_1}\cdots\lf_{k-1}^{x_{k-1}}]$
to reduce the result, cf. \cite[\S5]{Sutherland:HilbertClassPolynomials}.

This method may be used with any function $f$ satisfying Lemma~\ref{lemma:psiroots},
and in particular it applies to two infinite families of functions:
\renewcommand{\labelenumi}{(\arabic{enumi})}
\begin {enumerate}
\setlength{\itemsep}{4pt}
\item[(8)]
$A_N$, for $N>2$ prime, with $\legendre{D}{N}\ne -1$ and $N\nmid u$.
\item[(9)]
$\w_{p_1,p_2}^s$, for $p_1,p_2$ primes not both 2, with $\legendre{D}{p_1},\legendre{D}{p_2}\ne -1$ and $p_1,p_2\nmid u$.
\end{enumerate}
As above, $u$ denotes the conductor of $D<-4N^2$.

As noted earlier, for certain primes $p$ we may have difficulty
computing the edges of $G$ when $\gcd\bigl(\rmPsi_f(X,j_i),\rmPsi_f(X,[\nf]j_i)\bigr)$
has more than one root in $\Fp$.  While we need not use such primes, it is often
easy to determine the correct root.  Here we give two heuristic techniques for doing so.

The first applies when $N$ is prime, as with the Atkin functions.
In this case problems can arise when $H_D[f]$ has repeated roots modulo~$p$.  By Kummer's
criterion, this can happen only when $p$ divides the discriminant of $H_D[f]$, and even
then, a repeated root $x_1$ is only actually a problem when it corresponds to
two alternating edges in $G$, say $(j_1,j_2)$ and $(j_3,j_4)$, with the edge $(j_2,j_3)$ between them.
In this scenario we will get two roots $x_1$ and $x_2$ of  $\gcd\bigl(\rmPsi_f(X,j_2),\rmPsi_f(X,j_3)\bigr)$.
But if we already know that $x_1$ corresponds to $(j_1,j_2)$, we can unambiguously choose $x_2$.
In each of the $N$-isogeny cycles of~$G$, it is enough to find a single edge that yields
a unique root.  If no such edge exists, then every edge must yield the \emph{same} two
roots $x_1$ and~$x_2$, and we count each with multiplicity $n/2$.

The second technique applies when the roots of $H_D[f]$ are units, as with the double
$\eta$-quotients \cite[Thm.~3.3]{EnSc04}.  The product of the roots is then $\pm 1$.
Assuming that the number of edges in $G$ for which multiple roots arise is small (it is
usually zero, and rarely more than one or two), we simply test all the possible choices
of roots and see which yield $\pm 1$.  If only one combination works, then the correct
choices are determined. This is not guaranteed to happen, but in practice it almost
always does.

\subsection {A general algorithm}
\label{ssec:general}
We now briefly consider the case of an arbitrary modular function $f$
of level $N$, and sketch a general algorithm to compute $H_D[f]$ with the
CRT method.

Let us assume that $f(\tau)$ is a class invariant, and let $D$ be the
discriminant and $u$ the conductor of the order $\O=[1,\tau]$.  The
roots of $\rmPsi_f(X,j(\tau))\in K_\O[X]$ lie in the ray class field
of conductor $u N$ over $K$,
and some number $n$ of these, including $f(\tau)$, actually lie in the
ring class field $K_\O$.  We may determine~$n$ using the
method described in \cite[\S 6.4]{Broker:Thesis}, which computes the action of $(\O/N\O)^*/\O^*$
on the roots of $\rmPsi_f(X,j(\tau))$.  We note that the complexity of this
task is essentially fixed as a function of $|D|$.

Having determined $n$, we use Algorithm~\ref{alg:crtjenum} to enumerate
the roots $j_1,\ldots,j_h$ of $H_D\bmod p$ as usual, but if for any $j_i$
we find that $\rmPsi_f(X,j_i)\bmod p$ does not have exactly $n$ roots
$f_i^{(1)},\ldots,f_i^{(n)}$, we exclude the prime $p$ from our
computations.  The number of such $p$ is finite and may be bounded in
terms of the discriminants of the polynomials $\rmPsi_f(X,\alpha)$ as
$\alpha$ ranges over the roots of $H_D[f]$.  We then compute the polynomial
$H(X)=\prod_{i=1}^{h} \prod_{r=1}^n \bigl( X - f_i^{(r)} \bigr)$ of degree $nh$
in $\Fp[X]$.  After doing this for sufficiently many primes $p$, we
can lift the coefficients by Chinese remaindering to the integers.
The resulting $H$ is a product of $n$ distinct class polynomials, all of which
may be obtained by factoring $H$ in $\Z[X]$.  Under suitable heuristic
assumptions (including the GRH), the total time to compute $H_D[f]$ is
quasi-linear in $|D|$, including the time to factor $H$.

This approach is practically efficient only when $n$ is small,
but then it can be quite useful.  A notable example is the modular function $g$ for which
\[
\rmPsi_g(X,J)=(X^{12}-6X^6-27)^3-JX^{18}.
\]
This function was originally proposed by Atkin, and is closely related to certain
class invariants of Ramanujan \cite[Thm.~4.1]{BerndtChan:Ramanujan}.
The function $g$ yields class invariants when $D\equiv13\bmod 24$.
In terms of our generic algorithm,
we have $n=2$, and for $p\equiv 2\bmod 3$ we get
exactly two roots of $\rmPsi_g(X,j_i)\bmod p$, which differ only in sign.
Thus $H(X)=H_D[g^2](X^2)=H_D[g](X)H_D[g](-X)$, and from this we easily obtain
$H_D[g^2]$, and also $H_D[g]$ if desired.

\section{Computational Results}
\label {sec:implementation}
This section provides performance data for the
techniques developed above.  We used AMD Phenom II 945 CPUs
clocked at 3.0~GHz
for our tests; the software was implemented
using the \texttt {gmp} \cite{gmp} and \texttt {zn\_poly} \cite{Harvey:zn_poly} libraries,
and compiled with \texttt {gcc} \cite{gcc}.

To compute the class polynomial $H_D[f]$, we require
a bound on the size of its coefficients.  Unfortunately, provably accurate bounds for
functions~$f$ other than~$j$ are generally unavailable.  As a heuristic, we take
the bound $B$ on the coefficients of $H_D$ given by \cite[Lem.~8]{Sutherland:HilbertClassPolynomials},
divide $\log_2 B$ by the asymptotic height factor $c(f)$, and add a ``safety margin" of 256 bits.
We note that with the CM method, the correctness of the final result can be efficiently
and unconditionally confirmed~\cite{BissonSutherland:Endomorphism}, so we are generally
happy to work with a heuristic bound.

\subsection{Class polynomial computations using the CRT method}

Our first set of tests measures the improvement relative to previous
computations with the CRT method.  We used discriminants related to the
construction of a large set of pairing-friendly elliptic curves, see \cite[\S 8]{Sutherland:HilbertClassPolynomials} for details.
We reconstructed many of these curves, first using the Hilbert class polynomial $H_D$,
and then using an alternative class polynomial $H_D[f]$.  In each case we used
the explicit CRT to compute $H_D$ or $H_D[f]$ modulo a large prime $q$ (170 to 256 bits).

Table~\ref {tab1} gives results for four discriminants with $|D|\approx 10^{10}$,
three of which appear in \cite[Table~2]{Sutherland:HilbertClassPolynomials}.  Each column
lists times for three class polynomial computations.  First, we give the total time
$T_{\rm tot}$ to compute $H_D\bmod q$, including
the time $T_{\rm enum}$ spent enumerating $\Ell_D(\Fp)$, for all the small primes $p$,
using Algorithm~\ref{alg:crtjenum} as it appears in \S\ref{ssec:crtjenum}.  We then list the
times $T'_{\rm enum}$ and $T'_{\rm tot}$ obtained when Algorithm~\ref{alg:crtjenum} is modified to
use gcd computations whenever it is advantageous to do so, as explained in \S\ref{ssec:gcd}.
The gcd approach typically speeds up Algorithm~\ref{alg:crtjenum} by a factor
of~$2$ or more.

For the third computation we selected a function $f$ that yields class invariants
for $D$, and computed $H_D[f]\bmod q$.  This polynomial can be used in place of $H_D$
in the CM method (one extracts a root $x_0$ of $H_D[f]\bmod q$, and then extracts
a root of $\rmPsi_f(x_0,J)\bmod q$).
For each function $f$ we give a ``size factor", which approximates the ratio
of the total size of $H_D$ to $H_D[f]$ (over $\Z$).  In the first three examples this is just
the height factor $c(f)$, but in Example 4 it is $4c(f)$ because the prime 59 is ramified and
we actually work with the square root of $H_D[A_{59}]$, as noted in \S\ref{ssec:direct},
reducing both the height and degree by a factor of~2.

We then list the speedup $T'_{\rm tot}/T'_{\rm tot}[f]$ attributable to computing $H_D[f]$ rather
than $H_D$.  Remarkably, in each case this speedup is about twice what one would
expect from the height factor.  This is explained by a particular feature of the CRT method:
The cost of computing $H_D\bmod p$ for small primes $p$ varies significantly, and, as explained
in \cite[\S 3]{Sutherland:HilbertClassPolynomials}, one can accelerate the CRT method with a
careful choice of primes.  When fewer
small primes are needed, we choose those for which Step 1 of Algorithm~\ref{alg:crtj}
can be performed most quickly.

The last line in Table~\ref {tab1} lists the total speedup
$T_{\rm tot}/T'_{\rm tot}[f]$ achieved.

\begin{table}
\begin{center}
\begin{tabular}{@{}l@{}rrrr@{}}
&Example 1& Example 2& Example 3&Example 4\\
\midrule
$|D|$ & $13569850003$ & $\quad\medspace 11039933587$ & $\quad\medspace 12901800539$ & $\quad\quad \textsc{12042704347}$\\
$h(D)$& 20203 & 11280 & 54706 & 9788\\\vspace{2pt}
$\left\lceil\log_2 B\right\rceil$& 2272564& 1359134 & 5469776 & 1207412\\
$(\ell_1^{r_1},\ldots,\ell_k^{r_k})$&$(7^{20203})$&$(17^{1128},19^{10})$&$(3^{27038},5^2)$&$(29^{2447},31^2,43^2)$\\
\midrule
$T_{\rm enum}$ (roots)                   &  6440 & 10200 & 10800 & 21700\\
$T_{\rm tot}$                            & 19900 & 23700 & 52200 & 42400\\
\midrule
$T'_{\rm enum}$ (gcds)                   &  2510 &  2140 &  3440 &  4780\\
$T'_{\rm tot}$                           & 15900 & 15500 & 44700 & 25300\\
\midrule
Function $f$                         & $A_{71}$& $A_{47}$& $A_{71}$& $A_{59}$\\
Size factor                            &    36 &    24 &    36 &   120*\\
\vspace{6pt}
$T'_{\rm tot}[f]$                          &   213 &   305 &   629 & 191\\

Speedup ($T'_{\rm tot}/T'_{\rm tot}[f]$)   &    75 &    51 &    71 &  132\\
Speedup ($T_{\rm tot}/T'_{\rm tot}[f]$)    &    {\bf 93} &    {\bf 78} &    {\bf 83} &  {\bf 222}\\
\bottomrule
\end{tabular}
\\
\vspace{6pt}
\caption{Example class polynomial computations (times in CPU seconds)}
\label {tab1}
\end{center}
\end{table}

\subsection{Comparison to the complex analytic method}
\label{ssec:compare}

Our second set of tests compares the CRT approach to the complex
analytic method.  For each of the five discriminants listed in Table~\ref {tab2}
we computed class polynomials $H_D[f]$ for the double $\eta$-quotient
$\w_{3,13}$ and the Weber $\f$ function, using both the CRT approach
described here, and the implementation \cite{cm} of the complex analytic method
as described in \cite{Enge:FloatingPoint}.
With the CRT we computed $H_D[f]$ both over $\Z$ and modulo a 256-bit prime~$q$;
for the complex analytic method these times are essentially the same.

\begin{table}
\begin{center}
\begin{tabular}{@{}rrcrrcrrcrr@{}}
&&&\multicolumn{2}{c}{complex analytic}&&\multicolumn{2}{c}{CRT}&&\multicolumn{2}{c}{CRT mod $q$}\\
\cmidrule(r){4-5}\cmidrule(r){7-8}\cmidrule(r){10-11}
$|D|$&$\qquad h(D)$&$\qquad$&$\w_{3,13}$&$\qquad\quad\f\medspace$&$\qquad\medspace$&$\w_{3,13}$&$\qquad\medspace\f\medspace$&$\qquad\medspace$&$\w_{3,13}$&$\qquad\f\medspace$\\
\midrule
   6961631 &   5000 &&    15 &   5.4 &&  2.2 &  1.0 &&  2.1 &  1.0\\
  23512271 &  10000 &&   106 &    33 &&   10 &  4.1 &&  9.8 &  4.0\\
  98016239 &  20000 &&   819 &   262 &&   52 &   22 &&   47 &   22\\
 357116231 &  40000 &&  6210 &  1900 &&  248 &  101 &&  213 &   94\\
2093236031 & 100000 && 91000 & 27900 && 2200 &  870 && 1800 &  770\\
\bottomrule
\end{tabular}
\\
\vspace{6pt}
\caption{CRT vs. complex analytic (times in CPU seconds)}
\label {tab2}
\end{center}
\end{table}

We also tested a ``worst case" scenario for the CRT approach: the discriminant $D=-85702502803$,
for which the smallest non-inert prime is $\ell_1=109$.  Choosing
the function most suitable to each method, the complex analytic method computes
$H_D[\w_{109,127}]$ in 8310 seconds, while the CRT method computes $H_D[A_{131}]$ in
7150 seconds.  The CRT approach benefits from the attractive height
factor of the Atkin functions, $c(A_{131})=33$ versus $c(\w_{109,127})\approx 12.4$,
and the use of gcds in Algorithm~\ref {alg:crtjenum}.
Without these improvements,
the time to compute $H_D$ with the CRT method is 1460000 seconds. The
techniques presented here yield more than a 200-fold speedup in this example.

\subsection{A record-breaking CM construction}

To test the scalability of the CRT approach, we constructed an elliptic curve
using $|D|=1000000013079299 > 10^{15}$,
with $h(D)=10034174>10^7$.
This yielded a curve $y^2=x^3-3x+c$ of prime order $n$ over the prime field $\Fq$, where
\scriptsize
\begin{align*}
c &= 12229445650235697471539531853482081746072487194452039355467804333684298579047;\\
q &= 28948022309329048855892746252171981646113288548904805961094058424256743169033;\\
n &= 28948022309329048855892746252171981646453570915825744424557433031688511408013.
\end{align*}
\normalsize
This curve was obtained by computing the square root of $H_D[A_{71}]$ modulo $q$,
a polynomial of degree $h(D)/2=5017087$.  The height bound of 21533832 bits
was achieved with 438709 small primes $p$, the largest of which was
53 bits in size.  The class polynomial computation took slightly less than a week
using 32 cores, approximately 200 days of CPU time.
Extracting a root over $\Fq$ took 25 hours of CPU time using NTL \cite{Shoup:NTL}.

We estimate that the size of $\sqrt{H_D[A_{71}]}$ is over 13 terabytes,
and that the size of the Hilbert class polynomial $H_D$ is nearly 2 petabytes.
The size of $\sqrt{H_D[A_{71}]}\bmod q$, however, is under 200 megabytes, and less
than 800 megabytes of memory (per core) were needed to compute it.

\bibliographystyle{plain}

\end{document}